\definecolor{green4}{HTML}{009000}
\definecolor{magenta4}{HTML}{900090}
\newcommand{\excise}[1]{}
\newtheorem{thm}{Theorem}
\newtheorem{lemma}[thm]{Lemma}
\newtheorem{cor}[thm]{Corollary}
\newtheorem{prop}[thm]{Proposition}
\theoremstyle{definition}
\newtheorem{example}[thm]{Example}
\newtheorem{defn}[thm]{Definition}
\noindent\makebox[0mm][r]{\arabic{enumi}.}}
\noindent\makebox[0mm][r]{(\roman{enumi})}}
\def\bem#1{\textbf{#1}}
\def\AA{{\mathcal A}}
\def\CC{{\mathcal C}}
\def\aa{A}
\DeclareMathOperator{\NW}{\mathit{NW}}
\DeclareMathOperator{\SE}{\mathit{SE}}
\DeclareMathOperator{\up}{\mathrm{up}}
\DeclareMathOperator{\dn}{\mathrm{dn}}
\def\rp{\mathcal{RP}}
\def\implies{\Rightarrow}
\renewcommand\iff{\Leftrightarrow}
\def\sub#1#2{_{#1 \times #2}}
\font\co=lcircle10
\def\jr{\smash{\raise2pt\hbox{\co \rlap{\rlap{\char'005} \char'007}}
               \raise6pt\hbox{\rlap{\vrule height5pt}}
               \raise2pt\hbox{\rlap{\hskip4pt \vrule height0.4pt depth0pt
                width5.7pt}}
               \raise2pt\hbox{\rlap{\hskip-9.5pt \vrule height.4pt depth0pt
                width6.2pt}}
               \lower6pt\hbox{\rlap{\vrule height4.5pt}}}}
\def\tj{\smash{\raise2pt\hbox{\co \rlap{\rlap{\char'005} \char'007}}
               \raise6pt\hbox{\rlap{\vrule height3pt}}
               \raise2pt\hbox{\rlap{\hskip4pt \vrule height0.4pt depth0pt
                width2.7pt}}
               \raise2pt\hbox{\rlap{\hskip-6.5pt \vrule height.4pt depth0pt
                width3.2pt}}
               \lower5pt\hbox{\rlap{\vrule height3.5pt}}}}
\def\je{\smash{\raise2pt\hbox{\co \rlap{\rlap{\char'005}
                \phantom{\char'007}}}\raise6pt\hbox{\rlap{\vrule height5pt}}
               \raise2pt\hbox{\rlap{\hskip-9.5pt \vrule height.4pt depth0pt
                width6.2pt}}}}
\def\er{\smash{\raise2pt\hbox{\co \rlap{\rlap{\phantom{\char'005}} \char'007}}
               \raise2pt\hbox{\rlap{\hskip4pt \vrule height0.4pt depth0pt
                width5.7pt}}
               \lower6pt\hbox{\rlap{\vrule height4.5pt}}}}
\def\+{\smash{\lower6pt\hbox{\rlap{\vrule height17pt}}
                \raise2pt\hbox{\rlap{\hskip-9pt \vrule height.4pt depth0pt
                width18.7pt}}}}
\def\pl{\smash{\lower5pt\hbox{\rlap{\vrule height14pt}}
                 \raise2pt\hbox{\rlap{\hskip-6.5pt \vrule height.4pt depth0pt
                 width13.2pt}}}}
\def\hor{\smash{\raise2pt\hbox{\rlap{\hskip-9.5pt \vrule height.4pt depth0pt
                width19.2pt}}}}
\def\ver{\smash{\lower6pt\hbox{\rlap{\vrule height17pt}}}}
\def\textcross{\ \smash{\lower4pt\hbox{\rlap{\hskip4.15pt\vrule height14pt}}
                \raise2.8pt\hbox{\rlap{\hskip-3pt \vrule height.4pt depth0pt
                width14.7pt}}}\hskip12.7pt}
\def\textelbow{\ \hskip.1pt\smash{\raise2.8pt%
                \hbox{\co \hskip 4.15pt\rlap{\rlap{\char'005} \char'007}
                \lower6.8pt\rlap{\vrule height3.5pt}
                \raise3.6pt\rlap{\vrule height3.5pt}}
                \raise2.8pt\hbox{%
                  \rlap{\hskip-7.15pt \vrule height.4pt depth0pt width3.5pt}%
                  \rlap{\hskip4.05pt \vrule height.4pt depth0pt width3.5pt}}}
                \hskip8.7pt}
\def\qr{\smash{\lower6pt\hbox{\rlap{\vrule height8pt}}
                \raise2pt\hbox{\rlap{\hskip0pt \vrule height.4pt depth0pt
                width9.7pt}}}}
\begin{document}

\mbox{}\vspace{2.9ex}
\title{Duality of antidiagonals and pipe dreams}

\author{Ning Jia}
\address{School of Mathematics, University of Minnesota, MN}
\email{njia@math.umn.edu}

\author{Ezra Miller}
\address{School of Mathematics, University of Minnesota, MN}
\thanks{EM was partially supported by NSF grants DMS-0304789 and DMS-0449102}
\email{ezra@math.umn.edu}


\date{}


\maketitle

\thispagestyle{myheadings}
\font\rms=cmr8
\font\its=cmti8
\font\bfs=cmbx8
\markright{\its S\'eminaire Lotharingien de
Combinatoire \bfs 58 \rms (2008), Article~B58e\hfill}
\def\thepage{}
\markboth{\SMALL NING JIA AND EZRA MILLER}{\its S\'eminaire Lotharingien de
Combinatoire \bfs 58 \rms (2008), Article~B58e\hfill}

\noindent

The cohomology ring $H^*(Fl_n)$ of the manifold of complete flags in a
complex vector space~$\mathbb{C}^n$ has a basis consisting of the
Schubert classes~$[X_w]$, the cohomology classes of the Schubert
varieties~$X_w$ indexed by permutations $w \in S_n$.  The ring
$H^*(Fl_n)$ is naturally a quotient of a polynomial ring in $n$
variables; nonetheless, there are natural $n$-variate polynomials, the
Schubert polynomials, representing the Schubert
classes~\cite{LSpolySchub}.  The most widely used formulas
\cite{BJS,FSnilCoxeter} for the Schubert polynomial~$\mathfrak{S}_w$
are stated in terms of combinatorial objects called \emph{reduced pipe
dreams}, which can be thought of as subsets of an $n \times n$ grid
associated to~$w$.

Reduced pipe dreams are special cases of curve diagrams invented by
Fomin and Kirillov \cite{FK96}.  They were developed in a
combinatorial setting by Bergeron and Billey~\cite{BB}, who called
them \emph{rc-graphs}, and ascribed geometric origins in
\cite{koganThesis,grobGeom}.  One of the main results in the latter is
that the set~$\rp_w$ of reduced pipe dreams is in a precise sense dual
to a family~$\AA_w$ of simpler subsets of the $n \times n$ grid called
\emph{antidiagonals} (antichains in the product of two size~$n$
chains): every antidiagonal in~$\AA_w$ shares at least one element
with every reduced pipe dream, and each antidiagonal and reduced pipe
dream is minimal with this property \mbox{\cite[Theorem~B]{grobGeom}}.
The antidiagonals were identified there with the generators of a
monomial ideal whose zero set corresponds to a certain flat
degeneration of the Schubert variety~$X_w$.  Geometrically, the
duality meant that the components in the special fiber
are in bijection with the reduced pipe dreams in~$\rp_w$, which yield
directly the monomial terms in~$\mathfrak{S}_w$.  It was pointed out
in \cite[Remark~1.5.5]{grobGeom} that the proof of this duality was
roundabout, relying on the recursive characterization of $\rp_w$ by
``chute'' and ``ladder'' moves \cite{BB}, along with intricate
algebraic structures on the corresponding monomial ideals; our purpose
here is to give a direct \mbox{combinatorial explanation}.



Fix a permutation $w \in S_n$, and identify it with its
\bem{permutation matrix}, which has an entry~$1$ in row~$i$ and
column~$j$ whenever $w(i) = j$, and zeros elsewhere.  We write $w\sub
pq$ for the upper left $p \times q$ rectangular submatrix of~$w$ and
\[
  r_{pq} = r_{pq}(w) = \#\{(i,j) \leq (p,q) \mid w(i)=j\}
\]
for the rank of the matrix~$w\sub pq$.  Let
\[
  l(w) = \#\big\{(i,j) \mid w(i) > j \text{ and } w^{-1}(j) > i\big\}
       = \#\big\{i < i'\mid w(i) > w(i')\big\}
\]
be the number of inversions of~$w$, which is called the \bem{length}
of~$w$.

\begin{defn}
A $k \times \ell$ \bem{pipe dream} is a tiling of the $k \times \ell$
rectangle by \bem{crosses}~$\textcross$ and \bem{elbows}~$\textelbow$.
A pipe dream is \bem{reduced} if each pair of pipes crosses at most
once.  
\end{defn}

For examples as well as further background and references, see
\cite[Chapter~16]{cca}.  Pipe dreams should be interpreted as ``wiring
diagrams'' consisting of pipes entering from the west and south edges
of a rectangle and exiting though the north and east edges, with the
tiles $\textcross$ and $\textelbow$ indicating intersections and bends
of the pipes.

The set~$\rp_w$ of reduced pipe dreams for a permutation~$w$ consists
of those $n \times n$ pipe dreams with $l(w)$ crosses such that the
pipes entering row $i$ from the west exit from column~$w(i)$.  In such
a pipe dream $D$, all of the tiles below the main
southwest-to-northeast (anti)diagonal are necessarily elbow tiles.  We
identify $D$ with its set of crossing tiles, so that $D \subseteq [n]
\times [n]$ is a subset of the $n \times n$ grid.


\begin{defn}
An \bem{antidiagonal} is a subset $\aa \subseteq [n] \times [n]$ such
that no element is (weakly) southeast of another: $(i,j) \in \aa$ and
$(i,j) \leq (p,q)$ $\implies$ $(p,q) \notin \aa$.  Let $\AA_w$ be the
set of minimal elements (under inclusion) in the union over all $1
\leq p,q \leq n$ of the set of antidiagonals in~$[p] \times [q]$ of
size $1+r_{pq}(w)$.
\end{defn}

For example, when $w = 2143 \in S_4$,
\begin{align*}
\AA_{2143} &=
  \Big\{\big\{(1,1)\big\},\big\{(1,3),(2,2),(3,1)\big\}\Big\}
\\
\text{and}\quad
\rp_{2143} &=
   \Big\{\big\{(1,1), (1,3) \big\},\big\{(1,1),(2,2)\big\}, \big\{(1,1),
   (3,1)\big\}\Big\}.
\end{align*}
As another example, when $w = 1432 \in S_4$, 
\[
\AA_{1432} = \Big\{\!
  \big\{(1,2),(2,1)\big\},
  \big\{(1,2),(3,1)\big\},
  \big\{(1,3),(2,1)\big\},
  \big\{(1,3),(2,2)\big\},
  \big\{(2,2),(3,1)\big\}
  \!\Big\}
\]
and
\begin{align*}
\rp_{1432} = \Big\{&
  \big\{(1,2), (1,3), (2,2)\big\},
  \big\{(1,2), (2,1), (3,1)\big\},
\\&
  \big\{(2,1), (2,2), (3,1)\big\},
  \big\{(1,2), (2,1), (2,2)\big\}
  \Big\}.
\end{align*}

Given any collection~$\CC$ of subsets of $[n] \times [n]$, a
\bem{transversal} to~$\CC$ is a subset of $[n] \times [n]$ that meets
every element of~$\CC$ at least once.  The \bem{transversal dual}
of~$\CC$ is the set~$\CC^\vee$ of all minimal transversals to~$\CC$.
(Our definition of transversal differs from that in matroid theory,
where a transversal meets every subset only once.  Here, our
transversals do not give rise to matroids: the transversal duals need
not have equal cardinality, so they cannot be the bases of a matroid.)
When no element of~$\CC$ contains another, it is elementary that
taking the transversal dual of~$\CC^\vee$ yields~$\CC$.

Our goal is a direct proof of the following, which is part of
\cite[Theorem~B]{grobGeom}; see also \cite[Chapter~16]{cca} for an
exposition, where it is isolated as Theorem~16.18.

\pagenumbering{arabic}
\addtocounter{page}{1}
\markboth{\SMALL NING JIA AND EZRA MILLER}{\SMALL DUALITY OF ANTIDIAGONALS AND
PIPE DREAMS}

\begin{thm}\label{theorem}
For any permutation~$w$, the transversal dual of the set $\rp_w$ of
reduced pipe dreams for~$w$ is the set $\AA_w$ of antidiagonals
for~$w$; equivalently, $\rp_w = \AA^\vee_w$.
\end{thm}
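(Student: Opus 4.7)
The plan is to prove both inclusions $\rp_w \subseteq \AA_w^\vee$ and $\AA_w^\vee \subseteq \rp_w$ directly, avoiding the chute/ladder recursion and the ideal-theoretic machinery of \cite{grobGeom}. The cornerstone is the intersection lemma: for every $D \in \rp_w$ and every $A \in \AA_w$, $D \cap A \neq \varnothing$. Fix $A = \{(i_1, j_1), \ldots, (i_k, j_k)\} \subseteq [p] \times [q]$ with $k = 1 + r_{pq}(w)$, and assume for contradiction that $D \cap A = \varnothing$, so every cell of $A$ is an elbow tile of $D$. The key input is that pipes in any pipe dream are monotone to the northeast: they travel only east or north. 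At each antidiagonal elbow $(i_s, j_s)$, the distinguished west-to-north arm can be traced backward to where its carrying pipe enters $[p] \times [q]$ (through the west or south edge) and forward to where it leaves (through the north or east edge). Together with a staircase boundary along the southeast side of the order-ideal generated by $A$, these traces exhibit $k$ distinct pipes entering $[p] \times [q]$ from the west and exiting through the north; since the reduced pipe dream axioms for $w$ allow only $r_{pq}(w) = k - 1$ such pipes, this is the contradiction.

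With the intersection lemma in hand, the inclusion $\rp_w \subseteq \AA_w^\vee$ is immediate: all elements of $\rp_w$ have the same cardinality $l(w)$, so they are pairwise incomparable under inclusion and hence already minimal transversals. For the reverse inclusion $\AA_w^\vee \subseteq \rp_w$, I would argue that any $T \in \AA_w^\vee$ satisfies $|T| = l(w)$: the upper bound $|T| \leq l(w)$ comes from elements of $\rp_w$ already realizing transversals of size $l(w)$, while the lower bound $|T| \geq l(w)$ follows from a packing-style argument producing enough pairwise-disjoint antidiagonals of $\AA_w$ that each must contribute a distinct element of~$T$. A transversal of size exactly $l(w)$ then has the correct rank profile $|T \cap ([p] \times [q])|$ in every upper-left rectangle, forced by applying the antidiagonal condition to each $(p,q)$; and this rank profile characterizes reduced pipe dreams for $w$, so $T \in \rp_w$.

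The main obstacle is the intersection lemma itself. A naive injection sending each antidiagonal elbow $(i_s, j_s)$ to the pipe carrying its W-to-N arm fails, because a single NE-monotone pipe can execute W-to-N turns at several antidiagonal cells by interleaving S-to-E turns between them, so the associated pipes need not be distinct. The remedy is to count crossings of the staircase boundary of $A$ rather than pipes themselves: each antidiagonal elbow contributes exactly one east crossing (along its east edge) and one north crossing (along its south edge). Matching these crossings against the boundary behaviour of $[p] \times [q]$, while disentangling the contributions of pipes that enter the rectangle from the west versus the south and exit through the north versus the east, produces the required $k$ distinct west-to-north pipes and hence the contradiction with $r_{pq}(w) = k - 1$.
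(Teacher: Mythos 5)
Your first half---the intersection lemma---is essentially the paper's own argument (Proposition~\ref{p:incompFlow}): once you abandon the elbow-to-pipe injection and instead count vertical crossings of a staircase path through the antidiagonal, you are running the ``incompressible flow'' argument, and that part is sound in spirit. But already your deduction of $\rp_w \subseteq \AA^\vee_w$ is not ``immediate'': minimality of a transversal $D$ means that no proper subset of~$D$ is a transversal, and pairwise incomparability of the elements of~$\rp_w$ only excludes proper subsets that happen to lie in~$\rp_w$. To finish you need to know that every minimal transversal contained in~$D$ is itself a reduced pipe dream for~$w$, so this inclusion genuinely depends on the other one.

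The reverse inclusion is where the proposal breaks down, in three places. First, the bound $|T|\le l(w)$ does not follow from the existence of transversals of cardinality~$l(w)$: minimal means minimal under inclusion, not of minimum size, and the paper explicitly warns that these transversal duals need not be equicardinal for general set systems. Second, the packing argument for $|T|\ge l(w)$ fails outright: for $w=1432$ the five antidiagonals of~$\AA_w$ are supported on only five boxes and each has two boxes, so at most two of them can be pairwise disjoint, while $l(w)=3$; there is a genuine gap between maximum packing and minimum transversal here, so no packing can certify the lower bound. Third, the ``rank profile'' $|T\cap([p]\times[q])|$ is not an invariant of~$\rp_w$: the reduced pipe dreams $\{(1,2),(1,3),(2,2)\}$ and $\{(2,1),(2,2),(3,1)\}$ for $w=1432$ meet $[3]\times[1]$ in $0$ and $2$ boxes respectively, so no such profile can characterize membership in~$\rp_w$. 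The paper's route around all of this is different: it shows that any minimal transversal, read as a pipe dream, is \emph{reduced} (Proposition~\ref{reduced}, a ``wave propagation'' argument that trades a doubled crossing for elbows while preserving transversality), hence lies in $\rp_v$ for some $v\ge w$ in Bruhat order (Lemma~\ref{waterfront}), and then the subword property of Bruhat order together with the intersection lemma forces $v=w$. Some replacement for that reducedness step is the missing core of your proof.
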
 
 
In other words, every antidiagonal shares at least one element with
every reduced pipe dream, and it is minimal with this property. 

\begin{proof}
We will show two facts.
\begin{enumerate}[\quad{Claim~}1.]
\item\label{claim1}
$D \in \rp_w \implies D \supseteq E$ for some $E \in \AA^\vee_w$.

\item\label{claim2}
$E \in \AA^\vee_w \implies E \in \rp_v$ for some permutation $v \geq
w$ in Bruhat order.
\end{enumerate}
Assuming these, the result is proved as follows.  First we show that
$\AA^\vee_w \subseteq \rp_w$.  To this end, suppose $E \in
\AA^\vee_w$.  Then $E \in \rp_v$ for some $v \geq w$ by Claim~2, so $E
\supseteq D$ for some $D \in \rp_w$ by elementary properties of Bruhat
order (use \cite[Lemma~16.36]{cca}, for example: reduced pipe dreams
for~$v$ are certain reduced words for~$v$, and each of these contains
a reduced subword for~$w$).  Claim~1 implies that $D \supseteq E'$ for
some $E' \in \AA^\vee_w$.  We get $E = E'$ by minimality of~$E$, so $E
= D$ and $v = w$.

To show that $\rp_w \subseteq \AA^\vee_w$, assume that $D \in \rp_w$.
Claim~1 implies that $D \supseteq E$ for some $E \in \AA^\vee_w$.  But
$E \in \rp_w$ by the previous paragraph, so $D = E$ because all
reduced pipe dreams for~$w$ have the same number of crossing tiles.

The remainder of this paper proves Claims~1 and~2.
\end{proof}

The key to proving Claims~1 and~2 is the combinatorial geometry of
pipe dreams.  For this purpose, we identify $[n] \times [n]$ with an
$n \times n$ square tiled by closed unit subsquares, called
\bem{boxes}.  This allows us to view pipes, crossing tiles, elbow
tiles, and pieces of these as curves in the plane.  We shall
additionally need the following.

\begin{defn}
A \bem{northeast grid path}\/ is a connected arc whose intersection
with each box is one of its four edges or else the rising diagonal
$\diagup$ of the box.
\end{defn}

\begin{example}\label{ex:adiagNE}
Fix an antidiagonal $A$ in the $k \times \ell$ rectangle $[k] \times
[\ell]$.  There exists a northeast grid path~$G$, starting at the
southwest corner of $[k] \times [\ell]$ and ending at the northeast
corner, whose sole $\diagup$ diagonals pass through the boxes in~$A$.
There might be more than one; a typical path~$G$ with $k = 7$, $\ell =
15$, and $|A| = 3$ looks as follows:
\[
\psfrag{to}{$\!\rightsquigarrow$}
\includegraphics{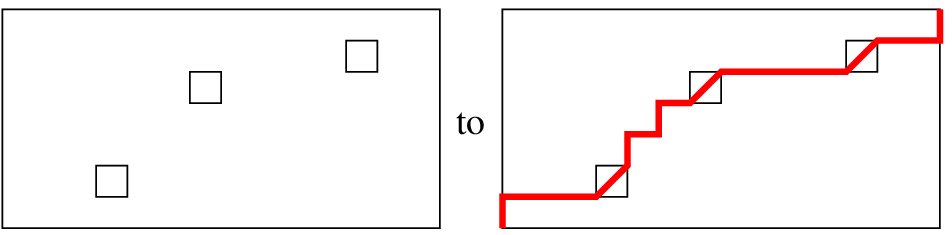}
\vspace{.2ex}
\]
\end{example}

\begin{example}\label{ex:pipeNE}
Let $P$ be a pipe in a pipe dream, or a connected part of a pipe.
Define $\up(P)$ to be the northeast grid path consisting of the north
edge of each box traversed horizontally by~$P$, the west edge of each
box traversed vertically by~$P$, and the rising diagonal in each box
through which $P$ enters from the south and exits to the east.
Dually, define $\dn(P)$ to consist of the south edge of each box
traversed horizontally by~$P$, the east edge of each box traversed
vertically by~$P$, and the rising diagonal in each box through which
$P$ enters from the west and exits to the north.
\begin{center}
\includegraphics{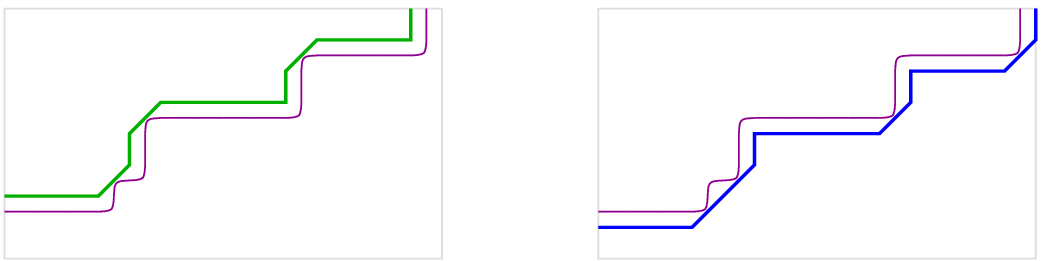}

\hfill \textcolor{magenta4}{$P$} with \textcolor{green4}{$\up(P)$}
\hfill \textcolor{magenta4}{$P$} with \textcolor{blue}{$\dn(P)$}
\hfill\mbox{}
\end{center}
\end{example}

Whenever a northeast grid path is viewed as superimposed on a pipe
dream, we always assume (either by construction or by fiat) that no
pipe crosses it vertically through a diagonal $\diagup$ segment.  This
is especially important in the next two lemmas.

The arguments toward Claims~1 and~2 are based on two elementary
principles for a region~$R$ bounded by northeast grid paths.  Such a
region has a lower (``southeast'') border $\SE = \SE(R)$ and an upper
(``northwest'') border $\NW = \NW(R)$.

\begin{lemma}[Incompressible flow]\label{l:incompFlow}
Fix a pipe dream.  If $k$ pipes enter $R$ vertically through $\SE$ and
none cross $\SE$ again, then $\NW$ has at least $k$ horizontal
segments.
\end{lemma}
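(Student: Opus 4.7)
The plan is a flow argument based on a local classification of how pipes can cross a northeast grid path. The goal is to show that each of the $k$ pipes entering $R$ vertically through $\SE$ must exit $R$ through its own horizontal segment of $\NW$.

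First I would enumerate all possible ways a pipe can cross a single tile of a northeast grid path. Because pipes in a pipe dream travel only northward or eastward within any box, and because the stated convention forbids vertical crossings of $\diagup$ segments, there are exactly three crossing types: a pipe going north through a horizontal segment, a pipe going east through a vertical segment, and a pipe going east through a diagonal segment. Inspecting each case---checking which side of the path the pipe departs from and which it arrives on---shows that going north through a horizontal segment is the unique way a pipe can pass from the SE side of the path to the NW side, while both eastward crossings move the pipe from the NW side to the SE side. Applied to the boundaries of $R$, this says that a pipe enters $R$ through $\SE$ precisely by going north across a horizontal segment of $\SE$, and a pipe exits $R$ through $\NW$ precisely by going north across a horizontal segment of $\NW$.

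With this classification in hand, each of the $k$ pipes that enters vertically through $\SE$ cannot leave through $\SE$ by hypothesis, so it must leave through $\NW$, and the only available mode of exit is traversing a horizontal segment of $\NW$. Because each horizontal edge of the grid is traversed by a single pipe (determined by the surrounding tiles), these $k$ exits occupy $k$ distinct horizontal segments of $\NW$, establishing the bound. The only delicate step is the local orientation check in the classification---keeping straight which triangle of each diagonal box lies on which side of the path, and hence whether a horizontal pipe through that box crosses into $R$ or out of it---but once that is settled, the remainder is a direct flow count.
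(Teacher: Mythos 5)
Your proof is correct and follows essentially the same route as the paper, whose entire proof is the one-line observation that every pipe crossing $\SE$ vertically must exit $R$ vertically through $\NW$. Your local classification of the three ways a pipe can cross a northeast grid path is precisely the justification the paper leaves implicit, so you have simply written out the details of the same flow-conservation argument.
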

\begin{proof}
Every pipe crossing $\SE$ vertically exits $R$ vertically
through~$\NW$.%
\end{proof}

Thus the ``flow'' consisting of the pipes entering from the south is
``incompressible''.

\begin{lemma}[Wave propagation]\label{l:waveProp}
If none of the pipes entering $R$ vertically through $\SE$ cross $\SE$
again, then $\#\{\diagup$ segments in SE$\} \geq \#\{\diagup$ segments
in NW$\}$.
\end{lemma}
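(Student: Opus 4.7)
The plan is to reduce the lemma to Lemma~\ref{l:incompFlow} via an elementary endpoint accounting. Since SE and NW bound the region $R$, they share its southwest and northeast corners, so they have equal total horizontal displacement. Writing $h_s, d_s$ (resp.\ $h_n, d_n$) for the numbers of horizontal and $\diagup$ segments of SE (resp.\ NW), and noting that each H contributes $(+1,0)$ and each $\diagup$ contributes $(+1,+1)$ to the displacement, we obtain
\[
  h_s + d_s = h_n + d_n,
\]
so $d_s - d_n = h_n - h_s$. The conclusion $d_s \geq d_n$ is therefore equivalent to $h_n \geq h_s$.

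Next I would check that every horizontal segment of SE carries a distinct pipe entering $R$ vertically. An H segment of SE is the south edge of some box $B$ in $R$, and whether the tile at $B$ is a cross or an elbow, a pipe enters $B$ from the south: the vertical through-pipe of a cross, or the south-to-east arc of an elbow. The hypothesis of the lemma forbids any such pipe from crossing SE again; in particular, the same pipe cannot enter vertically through two different H segments of SE, because (by the monotone NE motion of pipes) between its first vertical entry and any subsequent one, it would have to exit $R$ through a V segment of SE, a second crossing of SE. Hence the $h_s$ entering pipes are distinct. Applying Lemma~\ref{l:incompFlow} with $k = h_s$ then yields $h_n \geq h_s$, and combining with the endpoint identity above gives $d_s \geq d_n$.

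The only step requiring any care is the tile-level observation that every horizontal segment of SE hosts a pipe entering from the south, together with the argument that distinct such segments give rise to distinct pipes; once that and the endpoint identity are in hand, Lemma~\ref{l:incompFlow} finishes the proof.
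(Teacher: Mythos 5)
Your proof is correct and takes essentially the same route as the paper's: the paper likewise notes that the horizontal-plus-diagonal segment counts of SE and NW agree because the two arcs enclose a region, and then applies Lemma~\ref{l:incompFlow}. You have simply made explicit the intermediate steps the paper leaves implicit, namely that each horizontal segment of SE admits exactly one pipe entering vertically and that the hypothesis forces these pipes to be distinct, so that the $k$ of Lemma~\ref{l:incompFlow} equals the number of horizontal segments of SE.
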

\begin{proof}
The sum of the numbers of horizontal and diagonal segments on $\NW$
equals the corresponding sum for $\SE$ since these arcs enclose a
region.  Now use Lemma~\ref{l:incompFlow}.
\end{proof}

The ``waves'' here are formed by the northwest halves of elbow tiles,
each viewed as being above a corresponding rising $\diagup$ diagonal;
see also the proof of Lemma~\ref{waterfront}.  In the proof of
Proposition~\ref{reduced}, the ``flipped'' version is applied: if none
of the pipes entering the region~$R$ vertically (downward) through
$\NW$ cross $\NW$ again, then \mbox{$\#\{\diagup$ segments in NW$\}
\geq \#\{\diagup$ segments in SE$\}$}.

\begin{prop}\label{p:incompFlow}
If $D \hspace{-1.1pt}\in\hspace{-1.1pt} \rp_w\hspace{-.2pt}$ has no
$\textcross$ on an antidiagonal $A \subseteq [p]
\hspace{-1pt}\times\hspace{-1pt} [q]$ \mbox{then $|A| \leq r_{pq}$}.
\end{prop}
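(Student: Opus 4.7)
The plan is to apply Example~\ref{ex:adiagNE} inside the rectangle $[p]\times[q]$ to construct a northeast grid path $G$ from the southwest corner to the northeast corner of $[p]\times[q]$ whose $|A|$ rising diagonals pass exactly through the boxes of~$A$. Since $D$ has no cross on~$A$, each box of~$A$ is an elbow in~$D$, and the two quarter-arcs of such an elbow lie on opposite sides of the rising diagonal, so no pipe of~$D$ crosses $G$ through a diagonal segment (as the standing convention on $G$ requires). The remaining segments of $G$ consist of $q-|A|$ horizontal and $p-|A|$ vertical unit edges.

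The core of the argument is to count the total number of pipe--$G$ crossings in two ways and compare. For the upper count it is enough to treat the main case $p<n$ and $q<n$; the boundary cases $p=n$ or $q=n$ are immediate from the antichain bound $|A|\le\min(p,q)$, since then $r_{pq}=\min(p,q)$. Under $p,q<n$ the path $G$ stays away from the south and east boundary of the pipe dream, so every non-diagonal edge of~$G$ is crossed by exactly one pipe (one going up through each horizontal edge, one going right through each vertical edge). Thus the total number of pipe--$G$ crossings is exactly $(q-|A|)+(p-|A|)=p+q-2|A|$.

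For the lower count, classify the pipes that visit $[p]\times[q]$ by their pair of entry/exit sides in $\{\mathrm W,\mathrm S\}\times\{\mathrm N,\mathrm E\}$: the W$\to$N pipes (those with $i\le p$ and $w(i)\le q$) number $r_{pq}$; the W$\to$E pipes number $p-r_{pq}$; the S$\to$N pipes number $q-r_{pq}$; and the S$\to$E pipes make up the remainder. The key claim is that every W$\to$E pipe and every S$\to$N pipe must cross $G$ at least once. For a W$\to$E pipe: it enters at $(0,y_0)$ with $y_0>n-p$ and exits at $(q,y_1)$ with $y_1<n$, so its endpoints lie strictly above $G$'s southwest endpoint and strictly below $G$'s northeast endpoint respectively; since the pipe and $G$ are both monotone northeast curves in $[p]\times[q]$ with opposite vertical orderings at $x=0$ and $x=q$, a parity/continuity argument forces at least one transversal intersection, which by construction must occur at a horizontal or vertical segment of $G$ rather than a diagonal. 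The S$\to$N case is symmetric. Summing yields a lower bound of $(p-r_{pq})+(q-r_{pq})=p+q-2r_{pq}$ on the total pipe--$G$ crossings.

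Combining the two counts produces $p+q-2|A|\ge p+q-2r_{pq}$, which simplifies to $|A|\le r_{pq}$. The main obstacle I expect is the ``must cross'' step in the third paragraph: when $G$ partially coincides with the west or north edge of $[p]\times[q]$, some pipes enter or exit directly on~$G$, and one must verify carefully that those boundary crossings at vertical or horizontal segments of~$G$ are legitimately included in the computed count of $p+q-2|A|$ crossings.
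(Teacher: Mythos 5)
Your proof is correct and is essentially the paper's argument: the paper applies its Incompressible Flow lemma (Lemma~\ref{l:incompFlow}) to the region between the grid path $G$ through $A$ and the south--east boundary of $[p]\times[q]$, which is precisely your observation that the $q-r_{pq}$ pipes entering $[p]\times[q]$ from the south must each cross one of the $q-|A|$ horizontal segments of~$G$. Your double count additionally tracks the west-to-east pipes and the vertical segments of~$G$, but that half is redundant, since either summand of $p+q-2|A|\geq p+q-2r_{pq}$ already yields $|A|\leq r_{pq}$.
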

\begin{proof}
The $q$ pipes in $D$ that exit to the north from columns $1,\ldots,q$
are of two types: $r_{pq}$ of them enter $[p] \times [q]$ horizontally
into rows $1,\ldots,p$, and the other $q - r_{pq}$ of them enter into
$[p] \times [q]$ vertically from the south.  Now simply apply the
principle of incompressible flow to the region bounded by a northeast
grid path as in Example~\ref{ex:adiagNE} and the path consisting of
the south and east edges of $[p] \times [q]$.
\end{proof}

\begin{cor}\label{Dcontainsf}
Every pipe dream $D \in \rp_w$ is transversal to~$\AA_w$, so Claim~1
holds.
\end{cor}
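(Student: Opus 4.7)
The plan is to observe that the corollary is essentially a direct contrapositive of Proposition~\ref{p:incompFlow}, followed by a standard extraction of a minimal transversal.

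First I would unpack what ``transversal to $\AA_w$'' requires: for every $A \in \AA_w$, we must have $D \cap A \neq \nothing$. Fix such an $A$; by definition of $\AA_w$, the antidiagonal $A$ lies inside some $[p] \times [q]$ and has size $|A| = 1 + r_{pq}(w)$. Suppose for contradiction that $D$ contains no $\textcross$ on~$A$. Then Proposition~\ref{p:incompFlow} applies to $D$ and $A$, yielding $|A| \leq r_{pq}$, which contradicts $|A| = 1 + r_{pq}$. Hence $D \cap A$ is nonempty for every $A \in \AA_w$, so $D$ is a transversal to $\AA_w$. This is the main content of the corollary, and the only real input is the contrapositive of Proposition~\ref{p:incompFlow}.

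To deduce Claim~1 from this, I would invoke the general fact that any transversal to a collection contains a minimal one. Concretely, since $D \subseteq [n] \times [n]$ is finite and is a transversal to $\AA_w$, we may iteratively delete elements of $D$ that are not needed for the transversal property, terminating at some $E \subseteq D$ that still meets every $A \in \AA_w$ but such that no proper subset of $E$ does. Then $E \in \AA^\vee_w$ by the definition of the transversal dual, and $D \supseteq E$ as required by Claim~1.

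There is no real obstacle here; the heavy lifting has already been done in Lemma~\ref{l:incompFlow} and Proposition~\ref{p:incompFlow}. The only small point worth stating explicitly is the extraction of a minimal transversal from~$D$, which is purely set-theoretic and needs no further combinatorial input about pipe dreams or antidiagonals.
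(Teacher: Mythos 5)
Your proof is correct and takes essentially the same route as the paper: both obtain transversality as the contrapositive of Proposition~\ref{p:incompFlow} applied to the rectangle $[p] \times [q]$ witnessing $A \in \AA_w$, where $|A| = 1 + r_{pq}(w) > r_{pq}(w)$. The paper leaves the purely set-theoretic step of extracting a minimal transversal $E \subseteq D$ implicit in the phrase ``so Claim~1 holds,'' whereas you spell it out; this is the same argument.
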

\begin{proof}
If an antidiagonal $A \subseteq [p] \times [q]$ lies in~$\AA_w$, then
by definition $A$ has size at least $1+r_{pq}(w)$.  Now use
Proposition~\ref{p:incompFlow}.
\end{proof}

\begin{lemma}\label{waterfront}
If $D\in \rp_v$ for some permutation~$v$, then for every $p,q \in
\{1,\ldots,n\}$, there is an antidiagonal of size $r_{pq}(v)$ in $[p]
\times [q]$ on which $D$ has only elbows.
\end{lemma}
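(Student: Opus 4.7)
Since Proposition~\ref{p:incompFlow} already bounds every antichain of elbows in $[p]\times[q]$ by $r_{pq}(v)$, my plan is to exhibit one of size exactly $r_{pq}$. This is equivalent to constructing a northeast grid path $G$ from the southwest corner $(p,0)$ of $[p]\times[q]$ to its northeast corner $(0,q)$ whose $r_{pq}$ rising diagonals all lie in elbow cells of $D$. I will build $G$ greedily: starting at $(p,0)$, at each non-terminal corner $(i,j)$ with $j<q$ and $i>0$, step NE through cell $(i,j+1)$ when that cell is an elbow of $D$, otherwise step E; once $j=q$, step N until reaching $(0,q)$. Every diagonal of $G$ then lies on an elbow by construction, and distinct diagonals along a northeast grid path are automatically pairwise incomparable, so they form an antichain.

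The core task is showing that $G$ has exactly $r_{pq}$ diagonals. Applying Lemma~\ref{l:incompFlow} to the region between $G$ (as $\NW$) and the south and east edges of $[p]\times[q]$ (as $\SE$) recovers the upper bound of Proposition~\ref{p:incompFlow}: the $q-r_{pq}$ south-to-north pipes of $[p]\times[q]$ each cross $G$ vertically through a distinct horizontal (E) segment. For equality I need the converse matching: every E step of $G$ is crossed by one of these south-entering pipes, so that the E-step count of $G$ is exactly $q-r_{pq}$ and the diagonal count is therefore exactly $r_{pq}$.

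The main obstacle is establishing this converse, which I plan to handle via an invariant proved by induction on columns: at the moment greedy is about to process column $j$, every west-entering pipe of $[p]\times[q]$ that has not yet exited through the north by column $j-1$ lies in a row at most $c_{j-1}$ on the east edge of $[p]\times[j-1]$, where $c_{j-1}$ denotes greedy's current row. The base case $c_0=p$ is immediate. For the inductive step, if greedy steps NE through an elbow at $(c_{j-1},j)$, any west-entering pipe at row $c_{j-1}$ turns north there and by NE-monotonicity occupies strictly smaller rows thereafter, preserving the invariant at $c_j=c_{j-1}-1$; if greedy steps E through a cross at $(c_{j-1},j)$, the invariant at $j-1$ rules out any west-entering pipe at row $c_{j-1}+1$ on the east edge of $[p]\times[j-1]$, so the pipe traveling north through the cell $(c_{j-1}+1,j)$ which crosses greedy's new horizontal edge must trace back, via NE-monotonicity, to the south boundary of $[p]\times[q]$, as required.
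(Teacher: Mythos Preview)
Your greedy path does not in general have $r_{pq}$ diagonals; the construction can get stuck on a cross at the bottom of a column and miss an elbow sitting higher up.  Take $v=132$ and the reduced pipe dream $D=\{(2,1)\}$, with $p=2$, $q=1$.  Here $r_{21}(v)=1$, and indeed $\{(1,1)\}$ is an elbow antidiagonal of size~$1$.  But your greedy path starts at $(2,0)$, sees the cross at $(2,1)$, steps E to $(2,1)$, and then steps N twice---producing zero diagonals.  Since your path never takes an N step before reaching column~$q$, it has no way to reach the elbow at $(1,1)$.

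The logical gap is in the sentence ``every E step of $G$ is crossed by one of these south-entering pipes, so that the E-step count of $G$ is exactly $q-r_{pq}$.''  Your invariant correctly shows that the vertical pipe through an E step traces back to the south boundary, i.e.\ is \emph{south-entering}; but the count $q-r_{pq}$ is the number of \emph{south-to-north} pipes, and nothing in your argument rules out that the crossing pipe exits east rather than north.  In the example above, pipe~$3$ enters $[2]\times[1]$ from the south, passes through the cross at $(2,1)$, turns east in the elbow at $(1,1)$, and exits east---so it is south-to-east.  Thus your bijection between E steps and south-to-north pipes fails, and the equality $e=q-r_{pq}$ does not follow.

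The paper's proof avoids this by working directly with the $r_{pq}$ pipes in $I_{pq}$ (those entering in rows $\le p$ and exiting in columns $\le q$): for each column $k\le q$ it takes the southernmost box $b_k$ touched by any pipe of $I_{pq}$, observes that exactly $q-r_{pq}$ of these boxes are entered vertically from below (by the south-to-north pipes), and the remaining $r_{pq}$ boxes are the desired elbow antidiagonal.  This construction is column-by-column but is guided by the actual pipes of $I_{pq}$ rather than by a local greedy rule, so it automatically finds the right row in each column.
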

\begin{proof}
Let $I_{pq}$ be the set of all $r_{pq}$ of the pipes in~$D$ that enter
weakly above row $p$ and exit weakly to the left of column~$q$.  For
each $k \leq q$, let $b_k$ be the southernmost box (if it exists) in
column~$k$ that intersects any $P \in I_{pq}$; otherwise, let $b_k$ be
the northernmost box in column $k$.  Of the $q$ pipes exiting to the
north from columns $1, \dots, q$, precisely $q-r_{pq}$ of them cross
some $b_k$ vertically from the south.  The remaining $r_{pq}$ of the
boxes $b_k$ must be elbow tiles, and these form the desired
antidiagonal.
\end{proof}
\begin{center}
\psfrag{ p,q}{\footnotesize $p,q$}
\includegraphics{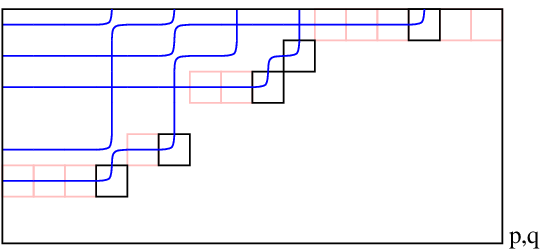}

{The pipes in~$I_{pq}$ and the boxes $b_1,\ldots,b_q$ in the proof of
Lemma~\ref{waterfront}}
\end{center}

\begin{prop}\label{reduced}
Every transversal $E \in \AA^\vee_w$, thought of as a pipe dream, is
reduced.
\end{prop}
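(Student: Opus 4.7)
The plan is to argue by contradiction. Suppose $E\in\AA^\vee_w$, viewed as a pipe dream, contains a pair of pipes that cross twice. Choose such a pair bounding an \emph{innermost} bigon, with SW corner $b_1$ and NE corner $b_2$, both in $E$; innermost means the bigon's interior contains no other cross of~$E$, so it consists entirely of elbow tiles. My aim is to contradict minimality of $E$ by exhibiting, for every antidiagonal $A\in\AA_w$ that contains $b_2$, a second element of $A\cap E$; then $E\smallsetminus\{b_2\}$ is still a transversal, violating $E\in\AA^\vee_w$.

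Fix such $A\subseteq[p]\times[q]$ with $|A|=1+r_{pq}(w)$ and $b_2\in A$. The strategy is to mimic the proof of Proposition~\ref{p:incompFlow}, but with the northeast grid path $G$ of Example~\ref{ex:adiagNE} \emph{rerouted} near the cross tile $b_2$: since $b_2\in E$ is a cross, a $\diagup$ segment through $b_2$ would violate the standing assumption that no pipe crosses $G$ vertically through a diagonal, so I would replace the diagonal at $b_2$ with a detour that enters the bigon interior, patched together from pieces of $\dn(Q_{\mathrm{seg}})$ and $\up(P_{\mathrm{seg}})$, where $P,Q$ are the two bigon pipes. Because the bigon interior consists of elbows, any $\diagup$ segments acquired in the detour lie on elbow tiles, so the resulting path $G'$ preserves the standing assumption and remains a valid northeast grid path.

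Applying the flipped wave propagation to the region bounded by $G'$ and the south and east edges of $[p]\times[q]$---its hypothesis being vacuously satisfied in a standard pipe dream, where all pipes move upward---together with the incompressible-flow count of pipes entering through the south edge, yields a relation between the horizontal and diagonal segment counts of $G'$ and $r_{pq}(w)$. The bookkeeping of diagonals lost and gained by the detour, combined with $|A|=1+r_{pq}(w)$, should force $G'$ to meet a further cross of $E$ inside $A$, supplying the required second element of $A\cap E$. The main obstacle is the precise geometric construction of the detour and the careful accounting of its $\diagup$ count, and in particular arguing that the reroute through the all-elbow interior must encounter a cross of $E$ on the bigon-boundary pipes $P$ or $Q$ lying in $A$; the incomparability of $b_1$ and $b_2$ together with the innermost choice is exactly what should make this forced.
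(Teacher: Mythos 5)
Your overall strategy---remove the northeast crossing of a doubly-crossing pair and show the result is still a transversal, contradicting minimality of $E$---matches the paper's. But there are two genuine gaps. First, your ``innermost bigon'' cannot in general be chosen so that its interior is all elbows: a third pipe may enter the bigon through $P$ and exit through $Q$, crossing each exactly once; such crossing tiles sit inside every bigon of the pair $(P,Q)$ and do not give rise to any smaller bigon to pass to. The paper's minimality choice (minimal taxicab distance between the two crossings) yields only the weaker---but sufficient---property that no pipe crosses $P$ or $Q$ twice, which is exactly the hypothesis needed to apply wave propagation along $\up(Q)$ and its flipped version along $\dn(P)$. Relatedly, the hypothesis of (flipped) wave propagation is not ``vacuously satisfied in a standard pipe dream'': it requires that pipes crossing the relevant border not cross it again, which fails near a double crossing and is precisely what the minimality choice is designed to rule out.

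Second, and more seriously, the crux of the argument is missing. Rerouting $G$ around $\boxplus_2$ and recounting $\diagup$ segments does not by itself produce a second crossing tile of $E$ on $A$: crossing tiles of $E$ need not lie on the rerouted path, and no link between ``$G'$ meets a cross'' and ``$A$ contains a cross'' is established---you yourself flag this step as an unresolved obstacle. The paper's mechanism is different: for each $A \ni \boxplus_2$ it constructs a \emph{new} antidiagonal $A'$ of the same size in the same rectangle with $\boxplus_2 \notin A'$, obtained by sliding $\boxplus_2$ (and possibly a terminal segment $\bar A$ of $A$) onto elbow tiles of $P$ or of $Q$; wave propagation is what guarantees that $P$ (resp.\ $Q$) carries enough elbow tiles in the right positions for $A'$ to remain an antidiagonal. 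Transversality of $E$ to $A'$ then forces $E$ to have a cross on $A'$, which by construction must be a box of $A$ other than $\boxplus_2$. This detour through a second antidiagonal is the key idea your proposal lacks, and I do not see how segment-counting on a single rerouted path could substitute for it.
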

\begin{center}
\psfrag{1}{\footnotesize $\scriptstyle 1$}
\psfrag{2}{\footnotesize $\scriptstyle 2$}
\psfrag{P}{\footnotesize $P$}
\psfrag{b}{\footnotesize $b$}
\psfrag{Q}{\footnotesize $Q$}
\includegraphics{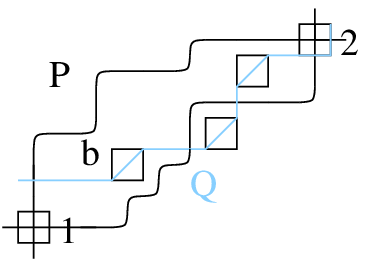}

{Illustration of the proof of Proposition~\ref{reduced}}
\end{center}

\begin{proof}
Fix a (not necessarily minimal) transversal $E$ of~$\AA_w$ containing
two pipes $P$ and $Q$ that cross twice, say at $\boxplus_1$
and~$\boxplus_2$, with $\boxplus_2$ northeast of $\boxplus_1$. Assume
that the pipes $P$ and $Q$ as well as the crosses $\boxplus_1$ and
$\boxplus_2$ are chosen so that the taxicab distance (i.e., the sum of
the numbers of rows and columns) between them is minimal. Then one of
the pipes, say $P$, is northwest of the other on the boundary of this
area. The minimality condition implies that no pipe in~$E$ crosses $P$
or~$Q$ twice, so the principle of wave propagation holds for any
region~$R$ such that $\SE(R)$ is part of~$\up(Q)$, and the flipped
version holds if $\NW(R)$ is part of~$\dn(P)$.

Our goal is to show that if $\boxplus_2$ is replaced by an elbow tile
in~$E$, then $E$ will still have a crossing tile on every antidiagonal
$\aa \in \AA_w$, whence the transversal $E$ is not minimal.  The
method: for any $\aa \in \AA_w$ containing~$\boxplus_2$, we produce a
new antidiagonal $\aa' \in \AA_w$ such that $\boxplus_2 \notin \aa'$,
and furthermore every box in $\aa'$ is either an elbow tile in~$E$ or
a crossing tile of~$\aa$.  Since $\aa'$ contains a crossing tile
of~$E$ other than~$\boxplus_2$ (by construction and transversality
of~$E$), we conclude that $\aa$ does, as well.

Assume that some box of~$A$ lies on~$\boxplus_2$. For notation, let
$\square_P$ be the box containing the only elbow tile of~$P$ in the
same row as~$\boxplus_2$, and $\square_Q$ the box containing the only
elbow tile of~$Q$ in the same column as~$\boxplus_2$. Construct $A'$
from~$A$ using one of the following rules, depending on how $A$ is
situated with respect to $P$ and~$Q$. (Some cases are covered more
than once; for example, if the next box of~$A$ strictly southwest
of~$\boxplus_2$ lies between $P$ and~$Q$ but south of the row
containing~$\square_Q$.)
\begin{itemize}
\item If the southwest box in~$A$ is on~$\boxplus_2$, or if $A$
continues southwest with its next box in a column strictly west
of~$\square_P$, then move $A$'s box on~$\boxplus_2$ west
to~$\square_P$. \item If $A$ continues southwest of~$\boxplus_2$ with
its next box in a row strictly south of~$\square_Q$, then move $A$'s
box on~$\boxplus_2$ south to lie on~$\square_Q$.
\end{itemize}
For the remaining cases, we can assume that $A$ has a box strictly
southwest of~$\boxplus_2$ but between $P$ and~$Q$ (lying on one of~
$P$ or~$Q$ is allowed).  Let $b$ be the southwest-most such box
of~$A$, and let $\bar A$ consist of the boxes of $A$ between
$\boxplus_2$ and~$b$.

\begin{itemize}
\item
Assume that $A$ continues to the west of $P$ southwest of~$b$.  Let
$G$ be a northeast grid path passing through all the boxes in $\bar A$
as in Example~\ref{ex:adiagNE}, starting with the bottom edge of the
box on~$P$ that is in the same row as~$b$, and ending with the east
edge of~$\boxplus_2$.  Applying the flipped version of wave
propagation to the region enclosed by~$G$ and~$\dn(P)$, we conclude
that we can define $A'$ by replacing ~$\bar A \cup \{\boxplus_2\}$
with an equinumerous set of elbow tiles on~$P$.

\item
If $A$ continues to the south of $Q$ after~$b$, let $G$ be a northeast
grid path passing through all the boxes in~$\bar A$ as in
Example~\ref{ex:adiagNE}, starting with the west edge of the box
on~$Q$ in the same column as~$b$, and ending with the east edge
of~$\boxplus_2$.  Applying wave propagation to the region enclosed by
$G$ and~$\up(Q)$, we conclude that we can define $A'$ by replacing
$\bar A \cup \{\boxplus_2\}$ with an equinumerous set of elbow tiles
on~$Q$.\qedhere
\end{itemize}
\end{proof}

\begin{cor}
Claim~2 holds: $E \in \AA^\vee_w \implies E \in \rp_v$ for some $v
\geq w$ \mbox{in Bruhat order}.
\end{cor}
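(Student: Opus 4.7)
The plan is to leverage the two results already in hand: Proposition~\ref{reduced}, which ensures that any $E \in \AA^\vee_w$ is in fact a reduced pipe dream, and Lemma~\ref{waterfront}, which produces, for each $(p,q)$, an antidiagonal of size $r_{pq}$ on which a reduced pipe dream has only elbow tiles. Combined with the classical rank characterization of Bruhat order, these will deliver Claim~2 almost immediately.

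First I would apply Proposition~\ref{reduced} to conclude that $E$ is a reduced pipe dream. Since every reduced pipe dream has a well-defined associated permutation (read off the endpoints of the pipes on the north and east boundaries), we obtain $E \in \rp_v$ for a unique $v \in S_n$. It then remains only to verify that $v \geq w$ in Bruhat order.

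For this, I would invoke the standard criterion that $v \geq w$ if and only if $r_{pq}(v) \leq r_{pq}(w)$ for every $1 \leq p,q \leq n$. Suppose, for contradiction, that some pair $(p,q)$ satisfies $r_{pq}(v) \geq 1 + r_{pq}(w)$. Lemma~\ref{waterfront} furnishes an antidiagonal $B \subseteq [p] \times [q]$ of size $r_{pq}(v)$ on which $E$ has only elbows. Choose any subset $B' \subseteq B$ of cardinality exactly $1 + r_{pq}(w)$; it is still an antidiagonal in $[p] \times [q]$ of the prescribed size, so by the defining property of $\AA_w$ (as minimal elements in the union over all $(p',q')$ of size-$(1+r_{p'q'})$ antidiagonals in $[p'] \times [q']$), $B'$ must contain some $A \in \AA_w$. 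But then $A \subseteq B$ consists entirely of elbow tiles of~$E$, contradicting the transversality of~$E$ to~$\AA_w$.

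The main obstacle is essentially nil at this stage: the genuine combinatorial content lives upstream, in Proposition~\ref{reduced} (via wave propagation applied to two pipes that would otherwise cross twice) and in the incompressible-flow argument powering Lemma~\ref{waterfront}. The only minor subtlety worth flagging is that $B'$ itself need not lie in $\AA_w$; we merely exploit that it contains some minimal element of~$\AA_w$, which is exactly what the definition guarantees.
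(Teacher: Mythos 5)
Your proposal is correct and follows exactly the route the paper takes: Proposition~\ref{reduced} gives $E \in \rp_v$, and then the rank characterization of Bruhat order combined with Lemma~\ref{waterfront} forces $v \geq w$ (the paper compresses into the phrase ``by Lemma~\ref{waterfront}'' precisely the contradiction you spell out, including the point that the size-$(1+r_{pq}(w))$ sub-antidiagonal need only \emph{contain} a minimal element of~$\AA_w$). No gaps; your version just makes the paper's terse final step explicit.
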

\begin{proof}
Bruhat order is characterized by $v \geq w \iff r_{pq}(v) \leq
r_{pq}(w)$ for all $p,q$.  As \mbox{$E \in \AA^\vee_w \implies E \in
\rp_v$} for some~$v$ by Proposition~\ref{reduced}, we get $v \geq w$
by Lemma~\ref{waterfront}.
\end{proof}



\begin{thebibliography}{BJS93}

\bibitem[BB93]{BB}
Nantel Bergeron and Sara Billey, \emph{RC-graphs and Schubert
  polynomials}, Exp. Math. \textbf{2} (1993), no.~4, 257--269.

\bibitem[BJS93]{BJS}
Sara~C. Billey, William Jockusch, and Richard~P. Stanley, \emph{Some
  combinatorial properties of {S}chubert polynomials}, J. Algebraic
  Combin. \textbf{2} (1993), no.~4, 345--374.

\bibitem[FK96]{FK96}
Sergey Fomin and Anatol~N. Kirillov, \emph{The {Y}ang--{B}axter
  equation, symmetric functions, and {S}chubert polynomials}, Discrete
  Math. \textbf{153} (1996), no.~1--3, 123--143.

\bibitem[FS94]{FSnilCoxeter}
Sergey Fomin and Richard~P. Stanley, \emph{Schubert polynomials and the
  nil-{C}oxeter algebra}, Adv. Math. \textbf{103} (1994), no.~2,
  196--207. 

\bibitem[Kog00]{koganThesis}
Mikhail Kogan, \emph{Schubert geometry of flag varieties and
  Gel\/$'\!$fand--Cetlin theory}, Ph.D. thesis, Massachusetts
  Institute of Technology, 2000.

\bibitem[KM05]{grobGeom}
Allen Knutson and Ezra Miller, \emph{Gr\"{o}bner geometry of Schubert
  polynomials}, Ann. Math. \textbf{161}, 1245--1318.

\bibitem[LS82a]{LSpolySchub}
Alain Lascoux and Marcel-Paul Sch{\"u}tzenberger, \emph{Polyn\^omes de
  {S}chubert}, C. R. Acad. Sci. Paris S\'er. I Math. \textbf{294} (1982),
  no.~13, 447--450.

\bibitem[MS05]{cca}
Ezra Miller and Bernd Sturmfels, \emph{Combinatorial commutative
  algebra}, Graduate Texts in Mathematics Vol. 227, Springer-Verlag,
  New York, 2005.
\end{thebibliography}
\end{document}